\documentclass[12]{amsart}
\usepackage{amsmath,amssymb,amsthm,color,enumerate,comment,centernot,enumitem,url,cite}
\usepackage{graphicx,relsize,bm}
\usepackage{mathtools}
\usepackage{array}

\usepackage{hyperref}

\makeatletter
\newcommand{\tpmod}[1]{{\@displayfalse\pmod{#1}}}
\makeatother

\newtheorem{thm}{Theorem}[section]
\newtheorem{lemma}[thm]{Lemma}

\newtheorem{cor}[thm]{Corollary}

\theoremstyle{remark}

\theoremstyle{definition}

\theoremstyle{THM}

\newcommand{\W}{{\mathcal W}}

\newcommand{\Gal}{{\mbox{{\rm{Gal}}}}}

\newcommand{\Z}{{\mathbb Z}}
\newcommand{\Q}{{\mathbb Q}}

\newcommand{\F}{{\mathbb F}}

\makeatletter
\@namedef{subjclassname@2020}{%
  \textup{2020} Mathematics Subject Classification}
\makeatother

\def\red#1 {\textcolor{red}{#1 }}
\def\blue#1 {\textcolor{blue}{#1 }}

\numberwithin{equation}{section}

\begin{document}

\title[On the monogenicity and Galois groups of $\boldsymbol{x^{2p}+ax^p+b^p}$]{On the monogenicity and Galois groups of $\boldsymbol{x^{2p}+ax^p+b^p}$}

\author{Joshua Harrington}
\address{Department of Mathematics, Cedar Crest College, Allentown, Pennsylvania, USA}
\email[Joshua Harrington]{Joshua.Harrington@cedarcrest.edu}

\author{Lenny Jones}
\address{Professor Emeritus, Department of Mathematics, Shippensburg University, Shippensburg, Pennsylvania 17257, USA}
\email[Lenny~Jones]{doctorlennyjones@gmail.com}

\date{\today}

\begin{abstract}
    Let $f(x)=x^{2p}+ax^p+b^p$, where $p$ is a prime and $a,b\in \Z$ with $ab\ne 0$. If $f(x)$ is irreducible over $\Q$, we say that $f(x)$ is {\em monogenic} if $\{1,\theta,\theta^2,\ldots ,\theta^{2p-1}\}$
    is a basis for the ring of integers of ${\mathbb Q}(\theta)$, where $f(\theta)=0$. 
    
    In this article, we give a characterization of the monogenic trinomials $f(x)$ according to their Galois groups. These results extend prior investigations of the authors.
    
\end{abstract}

\subjclass[2020]{Primary 11R09, 11R04; Secondary 11R32, 11R21}
\keywords{monogenic, trinomial, Galois}

\maketitle
\section{Introduction}\label{Section:Intro}
 Throughout this article, we let   
 \begin{equation}\label{Eq:f}
 \begin{gathered}
 f(x)=x^{2p}+ax^p+b^p \quad \mbox{and} \quad \delta=a^2-4b^p,\\
 \mbox{where $p\ge 3$ is prime and $a,b\in \Z$ with $ab\ne 0$.}
 \end{gathered}
 \end{equation} 

  We say $f(x)$ is {\em monogenic} if $f(x)$ is irreducible over $\Q$ and $\{1,\theta,\theta^2,\ldots,\theta^{2p-1}\}$ is a basis for the ring of integers of $K={\mathbb Q}(\theta)$, denoted $\Z_K$, where $f(\theta)=0$. It is well known for $f(x)$ irreducible over $\Q$ that \cite{Cohen}
\begin{equation} \label{Eq:Dis-Dis}
\Delta(f)=\left[\Z_K:\Z[\theta]\right]^2\Delta(K),
\end{equation} 
where $\left[\Z_K:\Z[\theta]\right]$ is called the {\em index} of $f(x)$, while $\Delta(f)$ and $\Delta(K)$ denote the respective discriminants over $\Q$ of $f(x)$ and the number field $K$.
Thus, 
$f(x)$ is monogenic if and only if $\Delta(f)=\Delta(K)$, or equivalently, $\Z_K=\Z[\theta]$ from \eqref{Eq:Dis-Dis}.
 
With $f(x)$ irreducible over $\Q$, the authors determined $\Gal(f)$ for $p=3$ in \cite[Theorem 4.2]{HJMS}. The second author extended this result in \cite{JonesJAA} to the situation when $p\ge 5$ with $b=1$. Recently, Jim and Hagedorn have determined $\Gal(f)$ for $p\ge 5$ without the restriction that $b=1$ \cite{JH}. Combining all these results yields 
\begin{thm}\label{Thm:Galois} Suppose that $f(x)$, as defined in \eqref{Eq:f}, is irreducible over $\Q$, and let $C_N$ denote the cyclic group of order $N$. Then
\begin{equation}
\Gal(f)\simeq\left\{\begin{array}{cl}
 C_p\rtimes C_{p-1} & \mbox{if $p\equiv 1 \tpmod{4}$ and}\\
  &  \mbox{$\delta=py^2$ for some $y\in \Z$}\\ 
 \left(C_p\rtimes C_{(p-1)/2}\right)\times C_2 & \mbox{if $p\equiv 3 \tpmod{4}$ and}\\
  & \mbox{$\delta=-py^2$ for some $y\in \Z$}\\ 
 \left(C_p\rtimes C_{p-1}\right)\times C_2 & \mbox{if $p\ge 3$ and}\\
 & \mbox{$\delta \ne (-1)^{p(p-1)/2}py^2$ for any $y\in \Z$.} 
\end{array}\right.
\end{equation}
\end{thm}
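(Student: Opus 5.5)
We would compute the splitting field directly. Write $f(x)=g(x^p)$ with $g(y)=y^2+ay+b^p$, whose roots are $\alpha,\beta=(-a\pm\sqrt{\delta})/2$, so $\alpha\beta=b^p$. Let $\theta$ be a root of $x^p-\alpha$ and $\zeta=\zeta_p$. Then $b/\theta$ is a root of $x^p-\beta$, because $(b/\theta)^p=b^p/\alpha=\beta$. Hence all roots of $f$ are $\zeta^i\theta$ and $\zeta^i b/\theta$. It follows that the splitting field is
\[
L=\Q(\theta,\zeta), \qquad \sqrt{\delta}\in L .
\]
Irreducibility of $f$ forces $[\Q(\theta):\Q]=2p$, and $\Q(\zeta)$ has degree $p-1$.

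The plan is to compute $[L:\Q]=2p(p-1)/[\Q(\theta)\cap\Q(\zeta):\Q]$ and then identify the group structure. Put $F=\Q(\sqrt{\delta})$. Over $M=F(\zeta)$ the extension $L=M(\theta)$ is Kummer of degree $p$ or $1$. It has degree $p$, since $p\mid[\Q(\theta):\Q]$ while $p\nmid[M:\Q]$. So $G=\Gal(L/\Q)$ has a normal Sylow subgroup $C_p=\Gal(L/M)$, and $G/C_p\simeq\Gal(M/\Q)$.

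The size of $\Gal(M/\Q)$ depends on whether $\sqrt{\delta}\in\Q(\zeta)$. The unique quadratic subfield of $\Q(\zeta)$ is $\Q(\sqrt{p^*})$, where $p^*=(-1)^{(p-1)/2}p$. Note that $(-1)^{p(p-1)/2}=(-1)^{(p-1)/2}$ because $p$ is odd. So $\sqrt{\delta}\in\Q(\zeta)$ exactly when $\delta=p^*y^2$, which gives the case split in the statement. The parity conditions $p\equiv1$ or $3 \pmod 4$ just record the sign of $p^*$. ($\delta$ is not a square, since $f$ is irreducible.)
\begin{enumerate}[label=(\roman*)]
\item If $\sqrt{\delta}\notin\Q(\zeta)$, then $M=\Q(\zeta)F$ with group $C_{p-1}\times C_2$, and $|G|=2p(p-1)$.
\item If $\sqrt{\delta}\in\Q(\zeta)$, then $M=\Q(\zeta)$ and $|G|=p(p-1)$.
\end{enumerate}

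For the structure, let $\sigma$ generate $\Gal(L/M)$, say $\sigma(\theta)=\zeta\theta$. The automorphisms fixing $\theta$ form a complement $H$ of $C_p$ (Schur–Zassenhaus). An element $\tau\in H$ with $\tau(\zeta)=\zeta^k$ satisfies $\tau\sigma\tau^{-1}=\sigma^{k}$. So $H$ acts on $C_p$ through the cyclotomic character, with kernel the automorphisms fixing $\zeta$.

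In case (i), choose $\rho\in G$ fixing $\zeta$ and sending $\alpha\mapsto\beta$ and $\theta\mapsto b/\theta$. Then $\rho$ has order $2$. It should commute with the elements fixing $\theta$ and $\zeta$ appropriately, and it inverts $\sigma$. The plan is to replace $\rho$ by a suitable element of the coset $\rho C_p\cdots$ so as to obtain a central involution. Concretely, the central $C_2$ should be generated by $\rho\tau_{-1}$, where $\tau_{-1}$ acts by $\zeta\mapsto\zeta^{-1}$ and fixes $\sqrt{\delta}$. Since both $\rho$ and $\tau_{-1}$ invert $\sigma$, the product centralizes $\sigma$. This gives $G\simeq(C_p\rtimes C_{p-1})\times C_2$.

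In case (ii), $G\simeq C_p\rtimes C_{p-1}$ is the full holomorph acting faithfully. For $p\equiv3\pmod 4$ it can be rewritten as $(C_p\rtimes C_{(p-1)/2})\times C_2$. Here $(p-1)/2$ is odd, so $C_{p-1}=C_{(p-1)/2}\times\langle-1\rangle$. Then $\tau_{-1}\circ(\text{the }\theta\leftrightarrow b/\theta\text{ swap})$ again centralizes $\sigma$. For $p\equiv1\pmod4$, $G$ stays the holomorph, matching the first line of the statement.

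The main obstacle is the bookkeeping with complements. One must verify that the chosen order-$2$ element is central and that the $C_p$ action is exactly as claimed. One must also check that $\Q(\theta)$ does not secretly contain $\sqrt{p^*}$ beyond what $\delta$ dictates. This last point we would handle by the degree count above.
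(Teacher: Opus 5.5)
The paper does not prove this theorem (it quotes it from earlier work of the authors and of Jim--Hagedorn), so your Kummer-theoretic argument has to stand on its own. Much of it is right: $L=\Q(\theta,\zeta)$, $[L:M]=p$, $N=\Gal(L/M)\simeq C_p$ is normal, the case split by whether $\sqrt{\delta}\in\Q(\zeta)$ matches the statement, and case (i) works in outline.

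The gap is in the structural step, exactly where the first two lines of the theorem are separated. First, $H=\Gal(L/\Q(\theta))$ is not a complement of $N$. We have $|H|=[L:\Q]/2p$, which is $p-1$ in case (i) and $(p-1)/2$ in case (ii), half the order of a complement in each case. Since $H$ fixes $\alpha=\theta^p$ and hence $\sqrt{\delta}$, $NH=\Gal(L/F)$ has index $2$ in $G$. On all of $G$ the correct rule is $\gamma\sigma\gamma^{-1}=\sigma^{\chi(\gamma)\epsilon(\gamma)}$, where $\epsilon(\gamma)=\pm1$ according as $\gamma$ fixes or negates $\sqrt{\delta}$. You use this yourself when you say $\rho$ inverts $\sigma$. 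In case (ii), $\epsilon$ is the Legendre symbol on $\Gal(\Q(\zeta)/\Q)\simeq(\Z/p\Z)^{\times}$. So $G/N$ acts on $N$ by $k\mapsto k\left(\frac{k}{p}\right)$, whose kernel is $\{1\}$ for $p\equiv1\pmod 4$ but $\{\pm1\}$ for $p\equiv3\pmod4$.

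Hence your assertion that in case (ii) $G$ is the holomorph acting faithfully is false for $p\equiv3\pmod4$. Nor can the holomorph be ``rewritten'' as $(C_p\rtimes C_{(p-1)/2})\times C_2$. In the holomorph the element $-1\in C_{p-1}$ inverts $C_p$ and the center is trivial, whereas the latter group has a central $C_2$. The abstract splitting $C_{p-1}\simeq C_{(p-1)/2}\times C_2$ says nothing about the action. So $p \bmod 4$ does not merely record the sign of $p^*$; it decides faithfulness, which is the whole difference between lines 1 and 2. Your last sentence does name the right central element, but it contradicts the faithfulness claim just before it. Also, neither of its two factors exists on its own there, because whether an automorphism negates $\sqrt{\delta}=y\sqrt{-p}$ is already determined by its action on $\zeta$. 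For $p\equiv1\pmod4$ your conclusion is correct, but it rests on the false complement claim.

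A repair covering all cases is as follows. Let $z$ be the permutation $r\mapsto b/r$ of the roots, i.e.\ $\theta\mapsto b/\theta$ and $\zeta\mapsto\zeta^{-1}$.
\begin{enumerate}
\item Since $b\in\Q$, $\gamma(b/r)=b/\gamma(r)$ for every $\gamma\in G$, so $z$ is central whenever $z\in G$.
\item $z\in G$ exactly when some element of $\Gal(M/\Q)$ sends $\zeta\mapsto\zeta^{-1}$ and $\sqrt{\delta}\mapsto-\sqrt{\delta}$; extend it to $L$ and adjust by a power of $\sigma$.
\item This holds in case (i) and in case (ii) with $p\equiv3\pmod4$. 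It fails in case (ii) with $p\equiv1\pmod4$, because complex conjugation fixes $\sqrt{p}$.
\item When $z\in G$, it lies outside $\Gal(L/F)=N\rtimes H$. Here $H$ embeds in $\Gal(\Q(\zeta)/\Q)$ and acts faithfully through $\chi$. So $G=\Gal(L/F)\times\langle z\rangle\simeq(C_p\rtimes C_{|H|})\times C_2$, with $|H|=p-1$ in case (i) or $(p-1)/2$ in case (ii).
\item In the remaining case, $G/N\simeq C_{p-1}$ acts through the injective map $k\mapsto k\left(\frac{k}{p}\right)$. Schur--Zassenhaus then gives the faithful $C_p\rtimes C_{p-1}$.
\end{enumerate}
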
 

If $p\nmid \delta$, we see from Theorem \ref{Thm:Galois} that $\Gal(f)\simeq \left(C_p\rtimes C_{p-1}\right)\times C_2$. Since, either all or none of the trinomials in $\{\pm f(\pm x),\pm \tilde{f}(\pm x)\}$ are monogenic, where $\tilde{f}$ denotes the reciprocal of $f$, the characterization of the monogenic trinomials $f(x)$ with $\Gal(f)\simeq \left(C_p\rtimes C_{p-1}\right)\times C_2$ follows from \cite{JonesTJM,JonesAJM}. Consequently, in this article, we focus on the situation when $p\mid \delta$, and    
we provide a complete characterization of the monogenic trinomials $f(x)$, according to their Galois groups in Theorem \ref{Thm:Galois}, in this situation.  
A rather striking consequence of this investigation is highlighted below in Corollary \ref{Cor:Main}, which links the monogenicity of a polynomial with the existence of primes of a certain form.   
This work extends prior investigations of the authors \cite{HJBAMS,JonesAJM,HJActa,HJJAA,JonesJAA,JonesTJM}. 
Our main results are  
 \begin{thm}\label{Thm:Main} 
 Let $p$, $f(x)$ and $\delta$ be as given in \eqref{Eq:f}. Suppose that $f(x)$ is irreducible over $\Q$, and that $p\mid \delta$. 
 \begin{enumerate}[label=(\arabic*)] 
  \item \label{I1} If $\Gal(f)\simeq C_p\rtimes C_{p-1}$, then 
   \[f(x) \ \mbox{is monogenic} \ \Longleftrightarrow \  
    (p,a,b)\in \{(5,\pm 3,1), (a^2+4,a,-1)\}.\] 
    \item \label{I2} If $\Gal(f)\simeq \left(C_p\rtimes C_{(p-1)/2}\right)\times C_2$, then  
 \[f(x) \ \mbox{is monogenic} \ \Longleftrightarrow \ (p,a,b)\in \{(3,\pm 1,1)\}.\] 
 \item \label{I3} If $\Gal(f)\simeq \left(C_p\rtimes C_{p-1}\right)\times C_2$, then 
 \begin{enumerate}
   \item if $\delta=(-1)^{p(p+1)/2}py^2$ for some $y$, then 
   \[f(x) \ \mbox{is monogenic} \ \Longleftrightarrow \ (p,a,b)\in \{(3,\pm4,1)\},\] 
   \item \label{last} if $\delta\ne (-1)^{p(p+1)/2}py^2$ for any $y$, then 
  \[f(x) \ \mbox{is monogenic} \ \Longleftrightarrow \ \left\{\begin{array}{l}
    b=1 \ \mbox{with $a-2$ and $a+2$ squarefree,}\\
    \qquad \qquad \mbox{or}\\
    b=-1 \ \mbox{with $4\nmid a$ and $(a^2+4)/\gcd(2,a)^2$ squarefree.}
  \end{array}\right.\]
  Moreover, for any prime $p\ge 3$, there exist infinitely many such trinomials $f(x)$ for each case of item \ref{last}.  
  \end{enumerate} 
  \end{enumerate} 
  \end{thm}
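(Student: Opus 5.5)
The plan is to reduce everything to the Jakhar--Khanduja--Sangwan (JKS) criterion for monogenicity of trinomials $x^n+Ax^m+B$, taking $n=2p$, $m=p$, $A=a$, $B=b^p$, together with the discriminant formula. Writing $f(x)=g(x^p)$ with $g(x)=x^2+ax+b^p$, one computes
\[
\Delta(f)=\pm\, p^{2p}\, b^{p(p-1)}\, \delta^{p},
\]
so only the primes dividing $pb\delta$ need to be checked.

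\textbf{Step 1: force $b=\pm1$.} Let $q$ be a prime with $q\mid b$, so that $q^p\mid B$ and $p\ge 3$.
\begin{itemize}
\item If $q\mid a$, the JKS condition for $q\mid A$, $q\mid B$ requires $q^2\nmid B$, which fails.
\item If $q\nmid a$, the JKS condition for $q\mid B$, $q\nmid A$ with $m=p\ge 2$ again requires $q^2\nmid B$, which fails.
\end{itemize}
Hence $f$ monogenic forces $b=\pm1$. Then $\delta=a^2\mp4$, and the hypothesis $p\mid\delta$ means $a^2\equiv \pm 4\pmod p$.

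\textbf{Step 2: the remaining primes.} There are two kinds of prime left to treat.
\begin{itemize}
\item \emph{Primes $q\ne p$ with $q\mid\delta$.} Here $q\nmid nB$, and I expect the JKS condition to reduce to the statement that $q^2\nmid\delta$. The prime $q=2$ needs separate care when $a$ is even; this is where the normalisation $(a^2+4)/\gcd(2,a)^2$ and the condition $4\nmid a$ should come from.
\item \emph{The prime $q=p$.} Here $p\mid n$ and $p\nmid B$. The JKS condition involves writing $n=p^{j}n'$ and computing the integer $\bigl(B+(-B)^{p^j}\bigr)/p$, or the analogous quantity built from $A$ and $B$. For $b=\pm1$ this becomes an explicit congruence condition on $a$ modulo $p^2$, essentially that $a^2\mp4\not\equiv0\pmod{p^2}$ in a suitable form.
\end{itemize}
Combining the two kinds of prime, monogenicity for $b=\pm1$ should be equivalent to $\delta/\gcd(2,a)^2$ being squarefree, subject to the $2$-adic caveat and the $p$-adic condition.

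\textbf{Step 3: the Galois cases.} I then split according to Theorem \ref{Thm:Galois}.
\begin{itemize}
\item \emph{Cases \ref{I1}, \ref{I2} and (3a).} Here $\delta=\pm p y^2$. Squarefreeness of $\delta$ away from $2$, together with $p^2\nmid\delta$, forces $y\in\{1,2\}$, with $y=2$ possible only when $a$ is even.
\item \emph{Case \ref{I1}.} With $b=-1$ this gives $a^2+4=p$, which is the family $(a^2+4,a,-1)$. With $b=1$ it gives $a^2-4=p$ or $a^2-4=4p$. Factoring $(a-2)(a+2)$ leaves only small solutions, and the constraint $p\equiv1\pmod 4$ together with the $p$-adic check should isolate $(5,\pm3,1)$.
\item \emph{Case \ref{I2}.} Here $a^2\mp4=-p$ or $-4p$, which forces $p=3$, $a=\pm1$, $b=1$.
\item \emph{Case (3a).} The analogous equation with the opposite sign gives $(3,\pm4,1)$.
\end{itemize}
In each case I must also discard candidates that are reducible, and candidates that fail at $q=2$.

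\textbf{Step 4: case (3b) and infinitude.} In case (3b) the conditions from Step 2 translate directly into the stated squarefreeness conditions. Note that $\delta=a^2-4=(a-2)(a+2)$ and that $p\mid\delta$. For infinitude, I restrict $a$ to a residue class modulo $p^2$ with $a^2\equiv\pm4\pmod p$ but not $\pmod{p^2}$, and with the correct parity. I then apply a squarefree-values sieve for quadratic polynomials in arithmetic progressions (Estermann or Hooley type) to get infinitely many such $a$ with $(a^2\mp4)/\gcd(2,a)^2$ squarefree. Values equal to $\pm p y^2$ are sparse, so they can be discarded, and irreducibility of $f$ can be arranged, for example via Capelli's theorem.

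\textbf{Expected obstacle.} I expect the main difficulty to be the $p$-adic JKS computation at $q=p$. One has to show it is exactly equivalent to $p^2\nmid\delta$, up to the special small cases, and handle the interplay with $q=2$. The first thing I would try is to write $a=a_0+pt$ with $a_0^2\equiv\pm4\pmod p$ and expand $(\pm1)^{p}$ and $a^p$ modulo $p^2$.
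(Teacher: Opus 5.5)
Your outline is correct, and it differs from the paper mainly in not using Theorem~\ref{Thm:KKR}.

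\textbf{How the routes differ.} The paper uses Kaur--Kumar--Remete to split monogenicity of $f=g(x^p)$ into three parts: $g(0)$ squarefree, which gives $b=\pm1$ at once; $g$ monogenic, which is Lemma~\ref{Lem:quadratic}; and $p\nmid[\Z_K:\Z[\theta]]$. So it applies Theorem~\ref{Thm:JKS2} to $f$ only at $q=p$. You instead apply Theorem~\ref{Thm:JKS2} to $f$ at every prime of $\Delta(f)$. Since $n_1=2$, $m_1=1$ and $2\mid\mid 2p$, this recovers the same information:
\begin{itemize}
\item conditions (i) and (iii) force $b=\pm1$;
\item condition (v) is exactly $q^2\nmid\delta$;
\item condition (ii) at $q=2$ gives the $4\mid a$ and $4\nmid a$ requirements.
\end{itemize}
You also aim to settle $q=p$ uniformly, after which every Galois case is Diophantine casework. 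The paper checks condition (iv) only on the surviving candidates and for $b=1$ in general, and cites an earlier result of Jones for $b=-1$ with $\delta\ne\pm py^2$. Your route is more self-contained.

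\textbf{First fix: the condition at $q=p$.} The operative condition there is (iv), not the quantity $(B+(-B)^{p^j})/p$. That quantity belongs to (ii), which needs $p\mid a$; this is impossible because $p\mid a^2\mp4$ and $p$ is odd. For $B=\pm1$ it is identically $0$ anyway. Take $H_1=x^2+ax+b\equiv(x-c)^2$ with $c\equiv -a/2 \pmod p$, and $H_2(x)=(ax^p+b+(-ax-b)^p)/p$. Then:
\begin{itemize}
\item for $b=1$ and $a=2+pk$, one gets $H_2(-1)\equiv -k \pmod p$, and symmetrically when $a\equiv-2$;
\item for $b=-1$, expanding $(\delta-4)^{(p+1)/2}$ and $(\delta-2)^p$ modulo $p^2$ (with $p\equiv1\pmod 4$, forced by $p\mid a^2+4$) gives $H_2(-a/2)\equiv -\tfrac{p+1}{4}\cdot\tfrac{\delta}{p}\pmod p$.
\end{itemize}
So your prediction holds: (iv) is equivalent to $p^2\nmid\delta$, and $f$ is monogenic if and only if $g$ is.

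\textbf{Second fix: the infinitude claim.} Step~4 assumes $a^2\equiv-4\pmod p$ is solvable when $b=-1$. For $p\equiv 3\pmod 4$ it is not, so the $b=-1$ subcase of item (3)(b) is then vacuous. The ``infinitely many'' claim for that subcase can only be proved when $p\equiv1\pmod4$. The paper's own argument misses this: it invokes infinitely many squarefree values of $(a^2+4)/\gcd(a,2)^2$ without imposing $p\mid\delta$, and it does not address irreducibility. Wherever the residue class is nonempty, your approach is the right one: sieve in a progression modulo $p^2$, discard the sparse Pell-type values $\delta=\pm py^2$, and discard the $p$-th-power units via Capelli.
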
  
  The following interesting corollary of Theorem \ref{Thm:Main} emphasizes the 
  connection between the existence of infinitely many primes of a particular quadratic form and the existence of infinitely many monogenic trinomials $f(x)$ of a certain form with $\Gal(f)\simeq C_p\rtimes C_{p-1}$. 
  \begin{cor}\label{Cor:Main} Let $a$ and $z$ denote positive integers. For any integer $N>2$, there exists a prime $p>N$ such that $f(x)=x^{2p}+ax^p-1$ is monogenic with $\Gal(f)\simeq C_p\rtimes C_{p-1}$ for some $a$ (and only one such $a$ exists) if and only if there exist infinitely many primes of the form $z^2+4$. 
  \end{cor}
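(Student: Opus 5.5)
The corollary should follow directly from item \ref{I1} of Theorem \ref{Thm:Main}, so I will first isolate exactly which triples $(p,a,-1)$ it allows. For $b=-1$ we have $\delta=a^2+4$. Item \ref{I1} says that when $p\mid\delta$, $f$ is irreducible and $\Gal(f)\simeq C_p\rtimes C_{p-1}$, $f$ is monogenic precisely for $(p,a,b)\in\{(5,\pm3,1),(a^2+4,a,-1)\}$. Since $b=-1$ here, only the family $p=a^2+4$ is relevant.

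For the forward direction, fix $N>2$ and suppose $p>N$ is prime and $f(x)=x^{2p}+ax^p-1$, with $a>0$, is monogenic with $\Gal(f)\simeq C_p\rtimes C_{p-1}$. By Theorem \ref{Thm:Galois} this Galois group occurs only when $p\equiv 1\pmod 4$ and $\delta=py^2$, so $p\mid\delta$. Item \ref{I1} then gives $p=a^2+4$, so $p$ is a prime of the form $z^2+4$ with $z=a$, and $a=\sqrt{p-4}$ is the unique positive choice. Since this holds for every $N>2$, we obtain primes of the form $z^2+4$ exceeding any bound, hence infinitely many.

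For the converse, suppose there are infinitely many primes $z^2+4$. Given $N$, choose such a prime $p=z^2+4>N$ with $z>0$, and set $a=z$. Then $p$ is odd, and $p\equiv 1\pmod 4$ because $z$ must be odd. Also $\delta=a^2+4=p=p\cdot1^2$, so Theorem \ref{Thm:Galois} gives $\Gal(f)\simeq C_p\rtimes C_{p-1}$, provided $f$ is irreducible, and item \ref{I1} then gives monogenicity.

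The main obstacle is irreducibility, which item \ref{I1} assumes as a hypothesis. I plan to prove it as follows. The roots of $x^2+ax-1$ are $\alpha,\beta=(-a\pm\sqrt p)/2$, which are units of $\Q(\sqrt p)$ with $\alpha\beta=-1$. It suffices to show $x^p-\alpha$ is irreducible over $\Q(\sqrt p)$, since then $[\Q(\theta):\Q]=2p$. By Capelli's theorem this holds unless $\alpha$ is a $p$-th power in $\Q(\sqrt p)$. Here $\alpha=(\sqrt p-a)/2$ has norm $-1$, and $\epsilon=(a+\sqrt p)/2$ is the fundamental unit, or its square or cube. For $p=z^2+4$ the fundamental unit of $\Q(\sqrt{z^2+4})$ is known to be $(z+\sqrt{z^2+4})/2$, since any smaller unit $(u+v\sqrt p)/2$ with $v\ge1$ would need $u<z$, which forces $u^2-pv^2\ne\pm4$. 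So $\alpha=-\epsilon^{-1}$, and since $p$ is odd, $-\epsilon^{-1}=(-\epsilon^{-1/p})^p$ would require $\epsilon$ to be a $p$-th power, which is impossible for a fundamental unit. Hence $f$ is irreducible, and $a=z$ is the only valid $a$ by the uniqueness shown above.
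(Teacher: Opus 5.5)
Your proof is correct, and the forward direction is the same as the paper's. You are in fact slightly more careful there: you extract $p\mid\delta$ from Theorem~\ref{Thm:Galois} before invoking item~\ref{I1}, a step the paper leaves implicit.

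The genuine difference is how you prove that $f(x)=x^{2p}+zx^p-1$ is irreducible in the converse.
\begin{itemize}
\item The paper observes that $f(x)$ is irreducible if and only if $f(-x)$ is, and quotes Lemma~3.1 of \cite{JonesAJM}.
\item You argue directly, using Capelli's theorem over $\Q(\sqrt{p})$ and the explicit fundamental unit $\epsilon=(z+\sqrt{z^2+4})/2$.
\end{itemize}
Your route is self-contained and shows \emph{why} irreducibility holds: the roots of $g$ are $\epsilon^{-1}$ and $-\epsilon$, and neither is a $p$-th power in $\Q(\sqrt p)$. It does use that $z^2+4$ is prime (squarefree). Without that, $(z+\sqrt{z^2+4})/2$ need not be fundamental; for example, $z=11$ gives $(11+\sqrt{125})/2=\bigl((1+\sqrt5)/2\bigr)^5$. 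That hypothesis is available here, so nothing breaks. The paper's citation is shorter, but it moves the work into an external lemma whose hypotheses the reader must check.

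A few small points to tidy:
\begin{itemize}
\item Since $\alpha\epsilon=(p-a^2)/4=1$, you have $\alpha=\epsilon^{-1}$, not $-\epsilon^{-1}$. This is harmless because $p$ is odd.
\item The phrase ``or its square or cube'' is stray and should be dropped.
\item In the minimality argument, first dispose of $v\ge 2$. In that case $v\sqrt p/2\ge\sqrt p>(z+\sqrt p)/2$, because $\sqrt p>z$. This leaves $v=1$ and $u<z$, so $u^2-p<-4$.
\item State explicitly why $\epsilon$ is not a $p$-th power. A $p$-th root of $\epsilon$ in $\Q(\sqrt p)$ would be an algebraic integer of norm $-1$, hence a unit $\pm\epsilon^k$. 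That forces $kp=1$, which is impossible.
\end{itemize}
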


\section{Preliminaries}\label{Section:Prelim}
The following theorem is due to Kaur, Kumar and Remete \cite{KKR}. 
 \begin{thm}\label{Thm:KKR}
   Let $g(x)$ be a monic polynomial with integer coefficients. Let $k\ge 2$ be an integer such that ${\mathfrak f}(x):=g(x^k)$ is irreducible over $\Q$. 
   Then ${\mathfrak f}(x)$ is monogenic if and only if all of the following conditions are satisfied: 
   \begin{enumerate}
   \item \label{KKR:I1} $g(0)$ is squarefree,
    \item \label{KKR:I2} $q$ does not divide the index of ${\mathfrak f}(x)$ for all primes $q\mid k$,
     \item \label{KKR:I3} $g(x)$ is monogenic. 
   \end{enumerate}
 \end{thm}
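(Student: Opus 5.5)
The statement is imported from \cite{KKR}; I would reprove it by working one prime at a time, using Dedekind's criterion together with the discriminant formula for compositions with $x^k$. Write $n=\deg g$, let $\theta$ be a root of ${\mathfrak f}$, put $\alpha=\theta^k$, $L={\mathbb Q}(\alpha)$ and $K={\mathbb Q}(\theta)$. Irreducibility of ${\mathfrak f}$ gives $[K:L]=k$, so $1,\theta,\ldots,\theta^{k-1}$ is an $L$-basis of $K$ and
\[\Z[\theta]=\bigoplus_{i=0}^{k-1}\theta^i\,\Z[\alpha].\]
The only primes that can divide the index of ${\mathfrak f}$ are those dividing $\Delta({\mathfrak f})=\pm k^{nk}g(0)^{k-1}\Delta(g)^k$. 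I would check this discriminant formula first via the standard resultant computation.

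\emph{Necessity.} Suppose $\Z_K=\Z[\theta]$. Condition (2) is then immediate. For (3), take $x\in\Z_L$. Then $x\in\Z_K=\Z[\theta]$, so $x=\sum_i c_i\theta^i$ with $c_i\in\Z[\alpha]$. By $L$-linear independence of the $\theta^i$, we get $c_i=0$ for $i>0$, hence $x\in\Z[\alpha]$ and $g$ is monogenic. For (1), suppose $q^2\mid g(0)$ for some prime $q$. Then ${\mathfrak f}(x)\equiv x^{k}u(x)\pmod q$ for some $u\in\Z[x]$. The factor $x$ then occurs with multiplicity at least $k\ge 2$. In Dedekind's criterion the remainder term is $({\mathfrak f}(x)-x^{k}u(x))/q$ or a close variant, and its constant term is $g(0)/q\equiv 0\pmod q$. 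So $x$ divides this term modulo $q$, and Dedekind's criterion shows $q$ divides the index, which is a contradiction. This step needs care when $x$ divides $\overline{g}$ to a higher power, but the constant-term argument is the same.

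\emph{Sufficiency.} Assume (1)--(3). Let $q$ be a prime dividing $\Delta({\mathfrak f})$; we must show $q\nmid[\Z_K:\Z[\theta]]$. If $q\mid k$, this is exactly (2). If $q\nmid k$ and $q\mid g(0)$, then $q\,\|\, g(0)$ by (1). Dedekind's criterion for the factor $x$ of $\overline{{\mathfrak f}}$ then succeeds, because the remainder has constant term $g(0)/q\not\equiv 0$. At the remaining irreducible factors the argument reduces to the final case. In that case $q\nmid k\,g(0)$. Here $\alpha$ is a $q$-adic unit, and $\Z_{(q)}[\theta]=\Z_{(q)}[\alpha][x]/(x^k-\alpha)$ is an extension of $\Z_{(q)}[\alpha]$ whose relative discriminant $\pm k^k\alpha^{k-1}$ is a unit, i.e. it is étale. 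By (3), $\Z_{(q)}[\alpha]$ is $q$-maximal, and an étale extension of a regular ring is regular. Therefore $\Z_{(q)}[\theta]$ is integrally closed and $q$ does not divide the index.

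The main obstacle is carrying out this last étale step, and the case $q\nmid k$, $q\mid g(0)$, rigorously in the language of the paper. The étale step can be replaced by an index computation: $\mathrm{ind}({\mathfrak f})$ at $q$ is at most what $\Delta$ allows, and the discriminant relation $v_q(\Delta({\mathfrak f}))=k\,v_q(\Delta(g))$ for such $q$, combined with the tower formula $\Delta(K)=\Delta(L)^{k}N_{L/{\mathbb Q}}({\mathfrak d}_{K/L})$ and $v_q(N({\mathfrak d}_{K/L}))=0$, forces $v_q$ of the index to be $0$. I would try this discriminant-tower comparison first, since it avoids commutative algebra.
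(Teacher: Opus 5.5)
The paper gives no proof of this theorem. It is quoted from Kaur, Kumar and Remete and used only as a black box, so your argument has to stand on its own, and in outline it does.

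\begin{itemize}
\item \emph{Necessity of (3).} The decomposition $\Z[\theta]=\bigoplus_{i<k}\theta^i\Z[\alpha]$, together with the $L$-linear independence of $1,\theta,\ldots,\theta^{k-1}$, gives it.
\item \emph{Necessity of (1).} The Dedekind computation with the lift $x$ gives it, since the remainder has constant term $\pm g(0)/q$ however often $x$ divides $\overline{g}$. The same computation settles the factor $x$ in the sufficiency direction.
\item \emph{Sufficiency for $q\nmid k\,g(0)$.} Your \'etale argument is correct, and so is your discriminant-tower comparison. For the latter, note that $\mathfrak d_{K/L}$ divides $k^k\alpha^{k-1}\Z_L$, whose norm $\pm k^{nk}g(0)^{k-1}$ is prime to $q$.
\end{itemize}

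The one step that is asserted rather than proved is the sentence ``at the remaining irreducible factors the argument reduces to the final case'' in the case $q\nmid k$, $q\mid g(0)$. Your final-case argument relies on $\alpha$ being a unit of $\Z_{(q)}[\alpha]$, which fails exactly here, so it cannot simply be quoted. There are two ways to close this.

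\emph{Localize.} Dedekind's criterion is really a statement about each maximal ideal $(q,t(\theta))$ separately. For $\overline{t}\ne x$ that ideal does not contain $\theta$, so its local ring is a localization of
\[\Z_{(q)}[\theta][\theta^{-1}]=\Z_{(q)}[\alpha][\alpha^{-1}][x]/(x^k-\alpha).\]
This ring is \'etale over $\Z_{(q)}[\alpha][\alpha^{-1}]$, because $\pm k^k\alpha^{k-1}$ is now a unit. That base ring is a localization of the Dedekind ring $\Z_{(q)}[\alpha]$ by (3), hence regular. Combined with your check at $\overline{t}=x$, every local ring of $\Z_{(q)}[\theta]$ is a DVR.

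\emph{Extend the tower computation.} Since $v_q(N_{L/\Q}(\alpha))=v_q(g(0))=1$, exactly one prime $\mathfrak Q\mid q$ of $L$ contains $\alpha$. It has residue degree $1$ and $v_{\mathfrak Q}(\alpha)=1$, so $x^k-\alpha$ is $\mathfrak Q$-Eisenstein. Hence $\mathfrak Q$ is totally and tamely ramified in $K$, and $v_q(N_{L/\Q}(\mathfrak d_{K/L}))=k-1$. This exactly matches the extra term $(k-1)v_q(g(0))$ in $v_q(\Delta(\mathfrak f))$; the other primes above $q$ do not divide $k^k\alpha^{k-1}$, so they contribute nothing.

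With the second fix, the tower argument handles every $q\nmid k$ uniformly, and Dedekind's criterion is needed only for the necessity of (1).
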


The next theorem, due to Jakhar, Khanduja and Sangwan \cite{JKS2}, gives necessary and sufficient conditions to determine when a prime divisor of $\Delta({\mathfrak F})$ does not divide the index of ${\mathfrak F}$, where ${\mathfrak F}$ is an arbitrary irreducible monic trinomial. 
\begin{thm}\label{Thm:JKS2} 
Let $n,m\in \Z$ with $n>m\ge 1$.
Let $K=\Q(\theta)$ be an algebraic number field with $\theta\in \Z_K$, the ring of integers of $K$, having minimal polynomial ${\mathfrak F}(x)=x^{n}+Ax^m+B$ over $\Q$, where $\gcd(m,n)=d_0$, $m=m_1d_0$ and $n=n_1d_0$. A prime factor $q$ of $\Delta({\mathfrak F})$ does not divide $\left[\Z_K:\Z[\theta]\right]$ if and only if $q$ satisfies one of the following conditions:
 \begin{enumerate}[label=(\roman*), font=\normalfont]
  \item \label{JKS:C1} when $q\mid A$ and $q\mid B$, then $q^2\nmid B$;
  \item \label{JKS:C2} when $q\mid A$ and $q\nmid B$, then
  \[\mbox{either } \quad q\mid a_2 \mbox{ and } q\nmid b_1 \quad \mbox{ or } \quad q\nmid a_2\left((-B)^{m_1}a_2^{n_1}+\left(-b_1\right)^{n_1}\right),\]
  where $a_2=A/q$ and $b_1=\frac{B+(-B)^{q^j}}{q}$, such that $q^j\mid\mid n$ with $j\ge 1$;
  \item \label{JKS:C3} when $q\nmid A$ and $q\mid B$, then
  \[\qquad \quad \mbox{either}\quad q\mid a_1 \mbox{ and } q\nmid b_2  \quad\mbox{or}\quad  q\nmid a_1b_2^{m-1}\left((-A)^{m_1}a_1^{n_1-m_1}-\left(-b_2\right)^{n_1-m_1}\right),\]
  where $a_1=\frac{A+(-A)^{q^l}}{q}$, such that $q^l\mid\mid (n-m)$ with $l\ge 0$, and $b_2=B/q$;
  \item \label{JKS:C4} when $q\nmid AB$ and $q\mid m$ with $n=s^{\prime}q^r$, $m=sq^r$, $q\nmid \gcd\left(s^{\prime},s\right)$, then the polynomials
   \begin{equation*}
     H_1(x):=x^{s^{\prime}}+Ax^s+B \quad \mbox{and}\quad H_2(x):=\dfrac{Ax^{sq^r}+B+\left(-Ax^s-B\right)^{q^r}}{q}
   \end{equation*}
   are coprime modulo $q$;
         \item \label{JKS:C5} when $q\nmid ABm$, then
     \[q^2\nmid \left(B^{n_1-m_1}n_1^{n_1}-(-1)^{m_1}A^{n_1}m_1^{m_1}(m_1-n_1)^{n_1-m_1}\right).\]
   \end{enumerate}
\end{thm}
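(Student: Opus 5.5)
This result is quoted from \cite{JKS2}, and I would prove it the way such statements usually are: through Dedekind's index criterion. Write $\overline{{\mathfrak F}}(x)=\prod_i \overline{g_i}(x)^{e_i}$ for the factorization modulo $q$ into distinct monic irreducibles, with monic lifts $g_i\in\Z[x]$. Put $G(x)=\prod_i g_i(x)$ and $H(x)=\prod_i g_i(x)^{e_i-1}$, and set
\[M(x)=\frac{{\mathfrak F}(x)-\prod_i g_i(x)^{e_i}}{q}.\]
Then $q\nmid[\Z_K:\Z[\theta]]$ if and only if $\overline{M}$ is coprime to $\gcd(\overline{G},\overline{H})$. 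Equivalently, $\overline{M}$ must not vanish at any root in $\overline{\F}_q$ of a repeated factor. The proof is then a case analysis on the five possibilities for how $q$ divides $A$, $B$ and $m$. In each case I would write down the factorization of $\overline{{\mathfrak F}}$ explicitly, choose convenient lifts, and evaluate $M$ at the roots of the repeated factors.

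\textbf{Cases (i)--(iii).} In case (i), $\overline{{\mathfrak F}}=x^n$. The only repeated factor is $x$, and $M=(Ax^m+B)/q$, so the criterion reads $q\nmid M(0)=B/q$, that is, $q^2\nmid B$. In case (ii), write $n=n'q^j$ with $q\nmid n'$. Since $B^{q^j}\equiv B \pmod q$, we get $\overline{{\mathfrak F}}=x^n+\overline{B}=(x^{n'}+\overline{B})^{q^j}$, and $x^{n'}+\overline{B}$ is separable because $q\nmid n'B$. Taking the lift $(x^{n'}+B)^{q^j}$ and expanding, one should find that modulo $q$,
\[M(x)\equiv a_2x^m+b_1+(\text{terms divisible by }x^{n'}+B).\]
The condition becomes that $a_2x^m+b_1$ has no common root with $x^{n'}+B$. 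At such a root we have $x^{n'}=-B$. Raising $a_2x^m=-b_1$ to the power $n_1$ and using $m/n'$ relations gives exactly $a_2\bigl((-B)^{m_1}a_2^{n_1}+(-b_1)^{n_1}\bigr)\not\equiv 0$. The sub-case $q\mid a_2$ is handled separately and forces $q\nmid b_1$. Case (iii) is the mirror image: write ${\mathfrak F}=x^m(x^{n-m}+A)+B$. The repeated factors are $x$, with multiplicity $m$, and the factors of $(x^{n-m}+A)^{q^l}$. Evaluating the analogous $M$ at $x=0$ gives the $b_2$ condition. At roots with $x^{n-m}=-A$ it gives the expression involving $a_1$, and the factor $b_2^{m-1}$ records the contribution of the root $0$.

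\textbf{Case (iv).} Here $q\nmid AB$ and $q\mid m$. Writing $n=s'q^r$ and $m=sq^r$, we have $\overline{{\mathfrak F}}=H_1(x)^{q^r}$ by the Frobenius identity. The natural lift then yields $M\equiv H_2 \pmod q$, up to multiples of $H_1$. Dedekind's criterion translates directly into $\gcd(\overline{H_1},\overline{H_2})=1$. Some care is needed here, since $H_1$ itself may have repeated factors; I would argue that the criterion is still equivalent to coprimality with all of $\overline{H_1}$.

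\textbf{Case (v), the main obstacle.} Here $q\nmid ABm$, and the repeated roots of $\overline{{\mathfrak F}}$ are common roots of ${\mathfrak F}$ and ${\mathfrak F}'$. I would use the trinomial discriminant formula
\[\Delta({\mathfrak F})=\pm B^{m-1}\bigl(B^{n_1-m_1}n_1^{n_1}-(-1)^{m_1}A^{n_1}m_1^{m_1}(m_1-n_1)^{n_1-m_1}\bigr)^{d_0}\]
together with the following facts. A repeated root of $\overline{{\mathfrak F}}$ has multiplicity exactly $2$ and lies in a separable family indexed by $d_0$-th roots. For such a root, $M$ vanishes exactly when the bracketed quantity $D$ is divisible by $q^2$. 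I would prove this by a local (Newton polygon / Hensel) computation at a double root $\alpha$: expand ${\mathfrak F}(\alpha+y)$, note that $q\mid D$ is what produces the double root, and show that $\overline{M}(\alpha)=0$ corresponds to the $q$-adic valuation of $D$ being at least $2$. Controlling this valuation precisely, without losing factors of $d_0$ or of units, is the step I expect to be hardest.
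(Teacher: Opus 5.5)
The paper gives no proof of this theorem. It is quoted from \cite{JKS2}, and your Dedekind-criterion case analysis is the standard route. Your treatment of (i), (iii) and (iv) is sound. There are, however, two genuine problems.

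\textbf{Case (ii): the sign.} The claim that the elimination ``gives exactly'' $a_2\bigl((-B)^{m_1}a_2^{n_1}+(-b_1)^{n_1}\bigr)$ is false.

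Let $\xi$ be a common root of $x^{n'}+\overline{B}$ and $a_2x^m+b_1$. Then $\xi^n=(\xi^{n'})^{q^j}=-B$, since Frobenius fixes $-B\in\F_q$, and $\xi^m=-b_1/a_2$. The relation $(\xi^m)^{n_1}=(\xi^n)^{m_1}$ then yields $(-b_1)^{n_1}=(-B)^{m_1}a_2^{n_1}$. Conversely, if this holds, write $un_1+vm_1=1$ and take $\xi^{d_0}=(-B)^u(-b_1/a_2)^v$; this produces a common root. So the correct condition is $q\nmid a_2\bigl((-B)^{m_1}a_2^{n_1}-(-b_1)^{n_1}\bigr)$. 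This is the same minus sign as in (iii), which you yourself call the mirror image.

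For odd $q$ the sign genuinely matters, and the statement as printed is false. Take ${\mathfrak F}(x)=x^3+3x+2$ and $q=3$. Then $a_2=1$, $b_1=-2$, and $(-B)^{m_1}a_2^{n_1}+(-b_1)^{n_1}=-2+8=6$, so the printed (ii) predicts $3\mid[\Z_K:\Z[\theta]]$. But ${\mathfrak F}(y+1)=y^3+3y^2+6y+6$ is $3$-Eisenstein, so $3$ does not divide the index. Your own Dedekind set-up confirms this: it gives $M=x^2+1$ with $\overline{M}(1)\neq 0$. So no argument can reach (ii) as printed; the sign must be corrected. This is harmless for the paper, which applies (ii) only with $q=2$, in Lemma \ref{Lem:quadratic}.

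\textbf{Case (v): the missing step.} Here you state the target but not the mechanism. The missing idea is the right choice of lifts.

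First, $q\mid\Delta({\mathfrak F})$ and $q\nmid ABm$ force $q\nmid n(n-m)$, since otherwise $\overline{{\mathfrak F}}$ is separable; hence $q$ is odd. Every double root $\alpha$ of $\overline{{\mathfrak F}}$ is then the reduction of a root $\tilde\alpha$ of ${\mathfrak F}'$, that is, of $x^{n-m}=-mA/n$, in an unramified extension of $\Q_q$.

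Dedekind's condition at $\alpha$ is $v({\mathfrak F}(\tilde\alpha))=1$ for any lift $\tilde\alpha$. Indeed, changing the lift changes ${\mathfrak F}(\tilde\alpha)$ by something of valuation at least $2$, because $v({\mathfrak F}'(\tilde\alpha))\ge 1$.

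Fix a root $\tilde\alpha_0$ of $x^{n-m}=-mA/n$. At roots of ${\mathfrak F}'$ one has ${\mathfrak F}(\tilde\alpha)=B+\beta\omega$, where $\beta=\tfrac{(n-m)A}{n}\tilde\alpha_0^{m}$ is a unit and $\omega=(\tilde\alpha/\tilde\alpha_0)^m$ runs over the $(n_1-m_1)$-th roots of unity. Then
\[\prod_{\omega}(B+\beta\omega)=B^{n_1-m_1}-(-\beta)^{n_1-m_1}=D/n_1^{n_1},\]
where $D$ is the bracket in (v). The factors differ pairwise by units, so at most one is a nonunit. Hence $v({\mathfrak F}(\tilde\alpha))=v_q(D)$ at every double root, which gives (v) with no loss from $d_0$ or from units.

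An equivalent route: $v_q(\Delta)=d_0v_q(D)$, and the repeated part of $\overline{{\mathfrak F}}$ has degree $d_0$ with multiplicity exactly $2$. By tameness, a regular double factor of degree $f$ contributes exactly $f$ to $v_q(\Delta)$, while an irregular one contributes at least $2f$.
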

In light of Theorem \ref{Thm:KKR}, we use Theorem \ref{Thm:JKS2} to prove the following useful lemma. 
\begin{lemma}\label{Lem:quadratic}
 Let $g(x)=x^2+ax+b$, where $a$ is a nonzero integer and $b\in \{\pm 1\}$. Suppose that $g(x)$ is irreducible over $\Q$. Let $\W:=\delta/\gcd(a,2)^2$. Then  
  \[g(x) \ \mbox{is monogenic} \ \Longleftrightarrow \  \W \ \mbox{is squarefree, with} \ \left\{\begin{array}{l}
   \mbox{$4\mid a$ if $2\mid a$, when $b=1$,}\\
   \mbox{$4\nmid a$, when $b=-1$.}
  \end{array}\right.\] 
\end{lemma}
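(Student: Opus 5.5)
The plan is to bypass the general trinomial criterion and use the classical description of quadratic rings of integers; Theorem \ref{Thm:JKS2} with $n=2$, $m=1$ could be used instead, but the direct route is shorter. Since $g(x)$ is irreducible of degree $2$, we have $K=\Q(\theta)=\Q(\sqrt{D})$ with $D:=\Delta(g)=a^2-4b$. In this lemma the quantity $\delta$ of the statement is exactly this $D$, and $\W=D/\gcd(a,2)^2$. By \eqref{Eq:Dis-Dis}, $g(x)$ is monogenic if and only if $D=\Delta(K)$. Since $\Delta(K)$ is always the fundamental discriminant attached to the squarefree kernel of $D$, this holds if and only if $D$ is itself a fundamental discriminant. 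That means either $D\equiv 1 \pmod 4$ with $D$ squarefree, or $D=4m$ with $m$ squarefree and $m\equiv 2,3\pmod 4$. Irreducibility guarantees $D$ is not a square, so $D\ne 1$ and $m\ne 1$, and the quadratic field is genuine.

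I then split according to the parity of $a$. If $a$ is odd, then $\gcd(a,2)=1$ and $\W=D\equiv 1\pmod 4$ automatically, so monogenicity is equivalent to $\W$ being squarefree. The side conditions in the statement are vacuous or automatically satisfied here: ``$4\mid a$ if $2\mid a$'' is vacuous for $b=1$, and $4\nmid a$ holds trivially for $b=-1$. If $a=2c$ is even, then $D=4(c^2-b)$ and $\W=c^2-b$, so monogenicity is equivalent to $\W$ being squarefree with $\W\equiv 2,3\pmod 4$.

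For the even case I examine the two signs of $b$ separately.
\begin{itemize}
\item When $b=1$, we have $\W=c^2-1$. If $c$ is odd, then $\W\equiv 0\pmod 8$, so $\W$ is not squarefree. If $c$ is even, then $\W\equiv 3\pmod 4$, and the congruence condition holds. Thus ``$\W$ squarefree'' already forces $c$ even, i.e. $4\mid a$, and conversely $\W$ squarefree with $4\mid a$ gives a fundamental discriminant. This matches the condition ``$4\mid a$ if $2\mid a$.''
\item When $b=-1$, we have $\W=c^2+1$. If $c$ is odd, then $\W\equiv 2\pmod 4$, which is acceptable. If $c$ is even, then $\W\equiv 1\pmod 4$, so $D=4\W$ is not a fundamental discriminant and the index is $2$, whether or not $\W$ is squarefree. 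Hence monogenicity holds exactly when $\W$ is squarefree and $4\nmid a$.
\end{itemize}
Putting the cases together yields the stated equivalence.

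The only delicate point is the bookkeeping at the prime $2$. One must be sure that the congruence class of $\W$ modulo $4$ is what separates $D=\Delta(K)$ from $D=4\Delta(K)$. One must also check that the extra conditions on $a$ in the statement are precisely the ones not already implied by squarefreeness of $\W$. The computation above handles both points.
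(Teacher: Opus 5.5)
Your proof is correct, but it takes a different route from the paper's.

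\textbf{The paper's route.} The paper never describes $\Z_K$. It applies the Jakhar--Khanduja--Sangwan criterion (Theorem \ref{Thm:JKS2}) with $n=2$, $m=1$, one prime at a time. A prime $q\mid\delta$ with $q\mid a$ must be $q=2$, and condition \ref{JKS:C2} (with $b_1=1$ if $b=1$ and $b_1=0$ if $b=-1$) gives exactly $4\mid a$, resp.\ $4\nmid a$. A prime $q\nmid a$ falls under condition \ref{JKS:C5}, which reduces to $q^2\nmid 4b-a^2=-\delta$.

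\textbf{Your route.} You use the classical description of quadratic rings of integers: $\Delta(K)$ is the fundamental discriminant attached to the squarefree part of $D$, so monogenicity means $D$ is itself fundamental. You then finish with a congruence analysis of $\W$ modulo $4$ and $8$.

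\textbf{Comparison.} The paper's route keeps the lemma inside the machinery it already uses for $f(x)$ (condition \ref{JKS:C4}) and needs no knowledge of $\Z_K$. However, it leaves the $2$-adic bookkeeping implicit. Turning ``$q^2\nmid\delta$ for odd $q$, plus the condition at $2$'' into ``$\W$ squarefree, plus the side condition'' silently uses that the side condition forces $4\nmid\W$. Your route is self-contained and makes the prime $2$ explicit. It also shows that for $b=1$ the side condition is redundant, since $a\equiv 2\pmod 4$ gives $8\mid\W$. For $b=-1$ the side condition is genuinely needed: $a=4$ gives $\W=5$, which is squarefree, yet the index is $2$.

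\textbf{Two small corrections.} First, when $b=-1$ and $4\mid a$, write $\W=t^2d_0$ with $d_0$ squarefree. Then $d_0\equiv 1\pmod 4$ and the index is $2t$, so it equals $2$ only when $\W$ is squarefree. All your argument needs is that the index is even. Second, ``squarefree kernel'' should read ``squarefree part,'' since the radical of $D$ would in general give the wrong field.
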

\begin{proof} Observe that $\Delta(g)=\delta$. We examine Theorem \ref{Thm:JKS2} with $A=a$, $B=b$ and $q$ a prime divisor of $\delta$. 

Suppose first that $b=1$. If $q\mid a$, then $q=2$, so that $b_1=1$ in condition \ref{JKS:C2} of Theorem \ref{Thm:JKS2}. It follows that $2\nmid [\Z_K:\Z[\theta]]$ if and only if $4\mid a$. If $q\nmid a$, then $q\nmid [\Z_K:\Z[\theta]]$ if and only if $q^2\nmid \W$ is squarefree by condition \ref{JKS:C5} of Theorem \ref{Thm:JKS2}, which completes the proof when $b=1$.

Suppose next that $b=-1$. 
If $q\mid a$, then $q=2$, so $b_1=0$ in condition \ref{JKS:C2} of Theorem \ref{Thm:JKS2}. Thus, $2\nmid [\Z_K:\Z[\theta]]$ if and only if $4\nmid a$. Suppose that $q\nmid a$. Then, by condition \ref{JKS:C5} of Theorem \ref{Thm:JKS2}, we see that $q\nmid [\Z_K:\Z[\theta]]$ if and only if $\W$ is squarefree. 
\end{proof}

\section{The Proof of Theorem \ref{Thm:Main}}\label{Section:MainThmProof}
\begin{proof}[Proof of Theorem \ref{Thm:Main}] 
We first examine the three conditions of Theorem \ref{Thm:KKR} to determine criteria for the monogenicity of $f(x)$. 
In the context of Theorem \ref{Thm:KKR}, with ${\mathfrak f}(x)$ equal to our trinomial $f(x)$, we have that $f(x)=g(x^p)$, where $g(x)=x^{2}+ax+b^p$. Note that $p\mid \Delta(f)$ since $\Delta(f)=b^{p(p-1)}p^{2p}(a^2-4b^p)^p$, by a theorem due to Swan \cite{Swan}.

Condition \eqref{KKR:I1} of Theorem \ref{Thm:KKR} requires that $g(0)$ be squarefree, which implies that $b=\pm 1$. 
For a prime $p\ge 3$, with $j\in \{1,2\}$ and $b=\pm 1$, let 
\begin{align}\label{Char}
\begin{split}
 (\Gamma_j,b) \ \ & \mbox{denote the condition that} \ \delta=(-1)^{j-1}py^2 \ \mbox{for some $y\in \Z$, and}\\
 (\Omega,b) \ \ & \mbox{denote the condition that} \ \delta\ne \pm py^2 \ \mbox{for any $y\in \Z$.}
 \end{split}
 \end{align}

Condition \eqref{KKR:I2} of Theorem \ref{Thm:KKR} indicates that we only have to check the single prime $q=p$ in Theorem \ref{Thm:JKS2} to determine conditions under which the index of $f(x)$ is not divisible by a prime divisor $q$ of $\Delta(f)$. Since $p\mid \delta$, where $p$ is odd, and $b=\pm 1$, we deduce that we only have to check condition \ref{JKS:C4} of Theorem \ref{Thm:JKS2}. 

Condition \eqref{KKR:I3} of Theorem \ref{Thm:KKR} requires that $g(x)$ be monogenic for the monogenicity of $f(x)$. 
For the four cases $(\Gamma_j,b)$ described in \eqref{Char}, it follows from 
  Lemma \ref{Lem:quadratic} that 
 \begin{equation}\label{Squarefree}
 \begin{gathered}
 \W=\frac{a^2-4b}{\gcd(a,2)^2}=\frac{(-1)^{j-1}py^2}{\gcd(a,2)^2} \quad \mbox{is squarefree, so that}\\
 a^2-4b\in \{(-1)^{j-1}p,(-1)^{j-1}4p\},
 \end{gathered}
 \end{equation} 
 where $4\mid a$ if $2\mid a$, when $b=1$, and $4\nmid a$ when $b=-1$. Clearly, of the four cases $(\Gamma_j,b)$, the case $(\Gamma_2,-1)$ is not possible.  Consequently, we examine the three cases $(\Gamma_j,b)\in \{(\Gamma_1,1), (\Gamma_2,1), (\Gamma_1,-1)\}$, and the two cases $(\Omega,\pm 1)$.

\subsection*{{\bf The Case} $\mathbf{(\Gamma_1,1)}$} From \eqref{Char} and \eqref{Squarefree}, we have that $a^2-4\in \{p,4p\}$.

If $a^2-4=p$, then either $a-2=1$ and $a+2=p$ or $a-2=-p$ and $a+2=-1$. From the first possibility we get $(p,a)=(5,3)$, while the second possibility produces the pair $(p,a)=(5,-3)$. These pairs correspond to the trinomials 
\begin{equation}\label{Tris1}
f(x)=x^{10}\pm 3x^5+1.
\end{equation} Examining condition \ref{JKS:C4} of Theorem \ref{Thm:JKS2} with $q=5$, we see that 
\[H_1(x)\equiv (x\pm 4)^2 \tpmod{5} \quad \mbox{and} \quad H_2(x)\equiv \pm 2x(x^4\pm 2x^3+3x^2\pm x+1)\tpmod{5},\] so that $H_1(x)$ and $H_2(x)$ are coprime in $\F_5[x]$.  Thus, the two trinomials in \eqref{Tris1} are monogenic, and we deduce from Theorem \ref{Thm:Galois} that $\Gal(f)\simeq C_5\rtimes C_4$ for both, since $p\equiv 1 \pmod{4}$. 

 If $a^2-4=4p$, then $4\mid a$ so that $(a/2)^2-1=p$. Arguing as before, we get that $(p,a)\in \{(3,\pm 4)\}$. These pairs yield the monogenic trinomials $f(x)=x^6\pm 4x^3+1$, with $\Gal(f)\simeq \left(C_3\rtimes C_2\right)\times C_2$ from Theorem \ref{Thm:Galois}, since $p\equiv 3 \pmod{4}$. 
 
 Of course, all of the above conclusions can be easily verified using a computer algebra system.  
 
 \subsection*{{\bf The Case} $\mathbf{(\Gamma_2,1)}$} From \eqref{Char} and \eqref{Squarefree}, we have that $a^2-4\in \{-p,-4p\}$.
The only viable situation here is $a^2-4=-p$, which implies that $p=3$ and $a=\pm 1$. Hence, $f(x)=x^6\pm x^3+1$. From condition \ref{JKS:C4} of Theorem \ref{Thm:JKS2} with $q=p$, we get
\[H_1(x)\equiv (x\pm 2)^2 \tpmod{3}\quad \mbox{and} \quad H_2(x)=-x(x\pm 1) \tpmod{3},\] which verifies that $H_1(x)$ and $H_2(x)$ are coprime in $\F_3[x]$. Thus, the trinomials $f(x)$ are monogenic, and 
$\Gal(f)\simeq \left(C_3\rtimes C_{1}\right)\times C_2\simeq C_6$ by Theorem \ref{Thm:Galois} since $p\equiv 3 \pmod{4}$.  

\subsection*{{\bf The Case} $\mathbf{(\Gamma_1,-1)}$} From \eqref{Char} and \eqref{Squarefree}, we have that $a^2+4\in \{p,4p\}$. 

Notice that if $a^2+4=4p$, then $p=(a/2)^2+1$, which implies that $p=2$ since $4\nmid a$, contradicting the fact that $p\ge 3$. Thus, $a^2+4=p$ and $p\equiv  1 \pmod{4}$. For monogenicity, we examine condition \ref{JKS:C4} of Theorem \ref{Thm:JKS2} with $q=p$. Then
 \[H_1(x)=x^2+ax-1\equiv (x+a/2)^2 \pmod{p},\] and  
 \begin{align*}
    H_2(-a/2)&=\frac{a(-a/2)^p-1+(-a(-a/2)+1)^p}{p}\\
   &=\frac{-(a^2)^{(p+1)/2}-2^p+\left(a^2+2\right)^p}{p2^p}\\
   &=\frac{-(p-4)^{(p+1)/2}-2^p+\left(p-2\right)^p}{p2^p}\\
   &=\frac{-\sum_{j=0}^{(p+1)/2}\binom{(p+1)/2}{j}p^j(-4)^{(p+1)/2-j}-2^p+\sum_{j=0}^p\binom{p}{j}p^j(-2)^{p-j}}{p2^p}\\
   &\equiv \frac{-(-4)^{(p+1)/2}-\frac{p(p+1)}{2}(-4)^{(p-1)/2}-2^p+(-2)^p}{p2^p} \pmod{p}\\
   &=\frac{(-1)^{(p+3)/2}2^{p+1}-\frac{p(p+1)}{2}(-1)^{(p-1)/2}2^{p-1}-2^{p+1}}{p2^p}\\
   &=-\left(\frac{p+1}{4}\right) \not \equiv 0 \pmod{p}.
 \end{align*}
 Hence, $H_1(x)$ and $H_2(x)$ are coprime in $\F_p[x]$. Therefore, $f(x)$ is monogenic with $\Gal(f)\simeq C_p\rtimes C_{p-1}$ from Theorem \ref{Thm:Galois}. 
 
 \subsection*{{\bf The Case} $\mathbf{(\Omega,1)}$} From Lemma \ref{Lem:quadratic}, we require that 
  \begin{equation}\label{SFconditions}
  \W=\delta/\gcd(a,2)^2=\left\{\begin{array}{cl}
    (a-2)(a+2) \ \mbox{is squarefree} & \mbox{if $2\nmid a$}\\[.5em]
    (a/2-1)(a/2+1) \ \mbox{is squarefree} & \mbox{if $2\mid a$,}  
  \end{array}\right.
   \end{equation} for the monogenicity of $f(x)$. It is easy to see that the squarefree conditions in \eqref{SFconditions} are equivalent to each of the factors of $\W$ being squarefree, regardless of the parity of $a$. Furthermore, since $p\ge 3$ and $p\mid \delta$, we see that $p\mid (a-2)$ or $p\mid (a+2)$, whether or not $2\mid a$. 
   
  We give details only for $p\mid(a-2)$ since $p\mid (a+2)$ is similar.
   Writing $a=pk+2$ for some $k\in \Z$, and examining condition \ref{JKS:C4} of Theorem \ref{Thm:JKS2}, we have 
   \[H_1(x)=x^{2}+(pk+2)x+1\equiv (x+1)^2 \pmod{p},\] and  
   \begin{align*}
     H_2(-1)&=\frac{-a+1+(a-1)^p}{p}\\  
     &=\frac{-pk-1+(pk+1)^p}{p}\\
     &=\frac{-pk-1+\sum_{j=0}^p\binom{p}{j}(pk)^j}{p}\\
     &=\frac{-pk+\sum_{j=1}^p\binom{p}{j}(pk)^j}{p}\\
     &\equiv -k \pmod{p}\\
     &\not\equiv 0 \pmod{p},
        \end{align*} since $a-2=pk$ is squarefree. Hence, $H_1(x)$ and $H_2(x)$ are coprime in $\F_p[x]$, so that $f(x)$ is monogenic. We see from Theorem \ref{Thm:Galois} that $\Gal(f)\simeq \left(C_p\rtimes C_{p-1}\right)\times C_2$. Since there exist infinitely many integers $a$ such that $(a-2)(a+2)$ is squarefree when $2\nmid a$, and infinitely many integers $a$ such that $(a/2-1)(a/2+1)$ is squarefree when $2\mid a$ \cite{BB}, it follows that there exist infinitely many $a\in \Z$ such that $f(x)$ is monogenic with $\Gal(f)\simeq \left(C_p\rtimes C_{p-1}\right)\times C_2$.  

\subsection*{{\bf The Case} $\mathbf{(\Omega,-1)}$} Since $p\mid \delta$ and $f(-x)$ is monogenic if and only if $f(x)$ is monogenic, this case follows from \cite[Theorem 1.1]{JonesTJM}. There exist infinitely many such trinomials since there exist infinitely many integers $a$ such that $(a^2+4)/\gcd(a,2)^2$ is squarefree \cite{Nagel}.  
\end{proof}

\section{The Proof of Corollary \ref{Cor:Main}}\label{Section:MainCorProof}
\begin{proof}
Suppose that $p$ is a prime with $p>N$ such that, for some integer $a>0$,  
 $f(x)=x^{2p}+ax^p-1$ is monogenic with $\Gal(f)\simeq C_p\rtimes C_{p-1}$. Then $p=a^2+4$ from Theorem \ref{Thm:Main}, and clearly, $a$ is unique. Conversely, suppose that $z^2+4$ is prime and let $f(x)=x^{2(z^2+4)}+zx^{z^2+4}-1=g(x^{z^2+4})$, where $g(x)=x^2+zx-1$. Since $g(x)$ is irreducible over $\Q$ and $f(x)$ is irreducible over $\Q$ if and only if $f(-x)$ is irreducible over $\Q$, it follows from \cite[Lemma 3.1]{JonesAJM} that $f(x)$ is irreducible over $\Q$. Then, we have from Theorem \ref{Thm:Main} that $f(x)$ is monogenic with $\Gal(f)\simeq C_p\rtimes C_{p-1}$, since $z^2+4\equiv 1 \pmod{4}$.     
\end{proof}


\section*{Acknowledgments} 







\end{document}